\theoremstyle{plain}
\newtheorem{theorem}{Theorem}[section]
\newtheorem{lemma}[theorem]{Lemma}
\numberwithin{equation}{section}
\theoremstyle{plain}
\theoremstyle{remark}
\def\bbR{{\mathbb {R}}}
\begin{document}

\date{September, 2012}

\title
{Unit distance problems}

\author[]
{Daniel Oberlin and Richard Oberlin}

\address
{Daniel  Oberlin \\
Department of Mathematics \\ Florida State University \\
 Tallahassee, FL 32306}
\email{oberlin@math.fsu.edu}

\address
{Richard Oberlin \\
Department of Mathematics \\ Louisiana State University \\
Baton Rouge, LA 70803}
\email{oberlin@math.lsu.edu}

\subjclass{11B30, 42B10, 28E99}
\keywords{unit distance problem, dimension}

\thanks{D.O. was supported in part by NSF Grant DMS-1160680
and R.O. was supported in part by NSF Grant DMS-1068523.}

\begin{abstract}
We study some discrete and continuous variants of the following problem of Erd\H os:
given a finite subset $P$ of
$\bbR^2$ or $\bbR^3$, what is the maximum number of pairs $(p_1 ,p_2 )$ with 
$p_1 ,p_2 \in P$ and $|p_1 -p_2 |=1$?
\end{abstract}

\maketitle

\section{Introduction and Statement of Results}

In 1946 Paul Erd\H os \cite{E1} posed the following question:
given a finite subset $P$ of
 $\bbR^2$ or $\bbR^3$, what is the maximum number of pairs $(p_1 ,p_2 )$ with $p_1 ,p_2 \in P$ and $|p_1 -p_2 |=1$?
The Erd\H os unit distance conjecture in  $\bbR^2$ is the estimate
\begin{equation}\label{UDC}
\big|\{ (p_1 ,p_2 )\in P^2 :|p_2 -p_1 |=1\}\big| \le C\, |P|\sqrt{\log (|P|)}.
\end{equation}
(We will use $|\cdot |$ for the cardinality of a finite set as well as Lebesgue measure on $\bbR^d$.)
In two dimensions the best currently-known partial result, due to Spencer, Szemer\'edi, and Trotter \cite{SST}, is 
\begin{equation*}
\big|\{ (p_1 ,p_2 )\in P^2 :|p_2 -p_1 |=1\}\big| \le C\, |P|^{4/3},
\end{equation*}
while the current best estimate for the analogous problem in $\bbR ^3$ has the exponent $3/2+\epsilon $ 
(for any $\epsilon >0$ and $C$ depending on $\epsilon$) in
place of $4/3$ - see Clarkson {\it et al.}
\cite{CEGSW}. In four or more dimensions it follows from an example we learned in \cite{IJL} that one cannot significantly improve the trivial $|P|^2$ bound: let $\tilde P$ be any set of $N$ points ${\tilde x}_n$ in $\bbR^2$ satisfying $|{\tilde x}_n |=2^{-1/2}$. Let $P$ be the subset of $\bbR^4$ given by 
\begin{equation*}
P\dot= \{({\tilde x}_n ;0,0),(0,0;{\tilde x}_m ):{\tilde x}_n ,{\tilde x}_m \in \tilde P \}.
\end{equation*}
Then the left hand side of \eqref{UDC} is at least $N^2$ while $|P|^2 =4N^2$. 
Our first result shows that if we 
ban a salient feature of this example - many points in low-dimensional subspaces - then a nontrivial estimate is still possible:
\begin{theorem}\label{discretetheorem}
Fix $d\geq 2$. There is a positive constant $C_d$ such that if $P\subset \bbR^d$ and if every 
$d$-element subset of $P$ is affinely independent, then 
\begin{equation}\label{result}
\big|\{ (p_1 ,p_2 )\in P^2 :|p_2 -p_1 |=1\}\big| \leq C_d \, |P|^{(2d-1)/d}.
\end{equation}
\end{theorem}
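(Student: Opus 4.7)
The plan is a double counting argument that turns the affine-independence hypothesis into a sharp constraint on how many points can be simultaneously at unit distance from many others. The key geometric observation is that $d$ unit spheres in $\mathbb{R}^d$ whose centers are affinely independent intersect in at most two points: the locus of points equidistant from such centers is the line through their circumcenter perpendicular to the $(d-1)$-flat they span, and this line meets any one unit sphere in at most $2$ points. Write $N=|P|$, and for each $p\in P$ set $r_p=|\{q\in P:|q-p|=1\}|$, so that the quantity to bound, call it $U$, equals $\sum_{p\in P} r_p$.

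I would count in two ways the number $T$ of tuples $(p;q_1,\ldots,q_d)$ with $p,q_i\in P$, the $q_i$ pairwise distinct, and $|p-q_i|=1$ for every $i$. By hypothesis, any $d$ distinct points of $P$ are affinely independent, so the geometric observation gives at most two eligible $p$ for each ordered $d$-tuple of distinct $q_i$; hence $T\le 2N^d$. On the other hand, summing over $p$ first yields $T=\sum_{p\in P} r_p(r_p-1)\cdots(r_p-d+1)$. One may discard those $p$ with $r_p<2d$ since their contribution to $U$ is at most $2dN$, which is absorbed into the target bound; for the remaining $p$ every factor $r_p-i$ is at least $r_p/2$, so $T\gtrsim_d \sum_{p:\,r_p\ge 2d} r_p^d$.

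To close the loop I would apply Hölder's inequality in the form $\sum_p r_p\le N^{1-1/d}\bigl(\sum_p r_p^d\bigr)^{1/d}$ to the surviving sum, which gives $\sum_{p:\,r_p\ge 2d} r_p^d \gtrsim N^{1-d} U^d$ (after checking that at least half of $U$ comes from the surviving $p$, else one is already done by the trivial bound). Combining with $T\le 2N^d$ produces $U^d\lesssim_d N^{2d-1}$, i.e., the desired estimate $U\le C_d\, N^{(2d-1)/d}$.

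The only real obstacle is isolating the right quantity to double count, namely the ``spider'' $(p;q_1,\ldots,q_d)$: once this is chosen, the affine-independence hypothesis enters cleanly through the sphere-intersection lemma, and the remainder is a Hölder / power-mean comparison. No incidence theorem of Szemerédi--Trotter type appears, which is consistent with the exponent $(2d-1)/d$ being weaker than the best known two- and three-dimensional results.
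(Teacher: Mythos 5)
Your proposal is correct and is essentially the paper's proof in disguise: you double count ``spiders'' $(p;q_1,\ldots,q_d)$ and use the fact that $d$ unit spheres with affinely independent centers meet in at most two points (this is exactly the content of the paper's Lemma~2.1, parameterized via $b_j=q_j-p$ and $a_j=q_j-q_1$), then close with the same H\"older inequality $\sum_p r_p\le N^{1-1/d}\bigl(\sum_p r_p^d\bigr)^{1/d}$. The only cosmetic difference is how you pass from the falling-factorial count to the $d$-th power sum — you discard $p$ with $r_p<2d$, whereas the paper includes all tuples and separately bounds the subset with a repeated $b_j$ by $O_d(N^d)$ — but both are harmless and yield the same exponent.
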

\noindent (The proofs of the results described in this section can be found in \S \ref{proofs}.)

Another famous problem of Erd\H os is his distinct distance conjecture, the estimate
\begin{equation}\label{DDC}
\big|\{ |p_1 -p_2 |:(p_1 ,p_2 )\in P^2 \}\big| \ge c\, \frac{|P|}{\sqrt{\log (|P|)}}.
\end{equation}
An easy pigeon-hole argument shows that \eqref{UDC} implies \eqref{DDC}. But while the conjecture \eqref{UDC}  
is still far from resolved, Guth and Katz \cite{GK} have recently come very close to \eqref{DDC} by showing that  
\begin{equation*}
\big|\{ |p_1 -p_2 |:(p_1 ,p_2 )\in P^2 \}\big| \ge c\, \frac{|P|}{{\log (|P|)}}.
\end{equation*}

This distinct distance problem has a continuous analog known as the Falconer distance set problem (\cite{F}): 
if $K$ is a compact subset 
of $\bbR^d$ and if we define the distance set $\Delta (K)$ by
\begin{equation*}
\Delta (K)=\{ |k_1 -k_2 |: (k_1 ,k_2 )\in K^2 \},
\end{equation*}
then what can we say about lower bounds for $\dim \big(\Delta (K)\big)$ in terms of $\dim (K)$? 
For example, Wolff proves in \cite{W} that 
if $K\subset\bbR^2$ and $\dim (K)>4/3$ then $\Delta (K)$ has positive Lebesgue measure and so dimension one, while 
Erdo\u{g}an \cite{B1} contains analogous results in $\bbR^d$. 

The primary purpose of this paper is to study the following continuous analog of the {\bf unit} distance problem: if 
\begin{equation*}\label{Ddef}
D=D(K)=\{(k_1 ,k_2 )\in K^2 :|k_2 -k_1 |=1\}, \, K\subset \bbR^d ,
\end{equation*}
find
\begin{equation}\label{g_d}
g_d (\alpha )\dot=\sup \{\dim (D): \text{$K$ is a compact subset of $\bbR^d$ with $\dim (K)=\alpha$}\}.
\end{equation}
When $d=1$ this is trivial: the projection $(k_1 ,k_2)\mapsto k_1$ is at most two-to-one on $D$ and so 
it follows that $\dim (D)\le\alpha$. 
If $\tilde K \subset\bbR$, $\dim (\tilde K )=\alpha$, and if $K=\tilde K \cup (\tilde K +1)$, then
$\dim (D)=\alpha =\dim (K)$. Thus $g_1 (\alpha )=\alpha$.

Here is a trivial bound in higher dimensions: the map 
$$
(k_1 ,k_2 )\mapsto (k_1 ,k_2 -k_1 )
$$
shows that $D$ and 
\begin{equation}\label{Gdef}
G\doteq\{(k,y): k\in K,\ y\in S^{d-1},\ k+y\in K\}
\end{equation}
have the same dimension. This gives the bound
\begin{equation}\label{trivbd}
\dim (D)\le \alpha +d-1 .
\end{equation}
More interestingly, 
$D$ is the intersection of  $K\times K$ with the variety 
\begin{equation*}
\{(x_1 ,x_2 )\in \bbR^d \times \bbR^d :|x_2 -x_1 |=1\}.
\end{equation*}
Thus one might conjecture that 
\begin{equation*}
\dim (D)\le 2\alpha -1
\end{equation*}
and so
\begin{equation}\label{largealpha.}
g_d (\alpha )\le 2\alpha -1.
\end{equation}
Of course this cannot always be correct since $g_d (\alpha )\ge \alpha$ 
if $0\le\alpha\le 1$ (because $g_d (\alpha )\geq g_1 (\alpha )$ 
since $\bbR^d$ contains a copy of $\bbR$). 
But here is an example related to \eqref{largealpha.}: 
suppose $C\subset B(0,1/2)\subset \bbR^{d-1}$ has $\dim (C)=\gamma$ and put 
$K=C\times [0,2]\subset\bbR^d$. Then $\alpha =\dim (K)=1+\gamma$. Also 
$$
D=\{(c_1 ,t_1 ;c_2 ,t_2):c_1 ,c_2 \in C,\ t_1 ,t_2 \in [0,2],\, |t_1 -t_2 |=\sqrt{1-|c_1 -c_2 |^2}\}.
$$
Since for each fixed $(c_1 ,t_1 ;c_2)$ with $c_1 ,c_2 \in C,\ 0\le t_1 \le 1$ there is a $t_2 \in [0,2]$ which works in 
$|t_1 -t_2 |=\sqrt{1-|c_1 -c_2 |^2}$, it follows that
$$
\dim (D)=\dim (C\times C )+1\ge 2\gamma+1 =2\alpha-1. 
$$
Thus when $\alpha\ge 1$ it is at least not possible to do better than \eqref{largealpha.}. This example has another implication too: there are sets $C\subset \bbR$ with $\dim (C)=0$ and $\dim (C\times C)=1$. (That is a manifestation of the fact that Hausdorff dimension does not always behave well when forming Cartesian products.) It follows that there are sets $K\subset \bbR^2$ with $\dim (K)=1$ and $\dim (D)=2$, discouraging news when looking for something better than the trivial estimate \eqref{trivbd}.
To rule out this sort of degeneracy we will assume that our $\alpha$-dimensional sets $K$ have a certain regularity - defining 
$K_\delta =K+B(0,\delta )$, we will assume for the remainder of the paper that $K_\delta$ is a $\delta$-discrete $\alpha$-set in the sense of Katz and Tao \cite{KT2}. This means that 
\begin{equation}\label{alphaset}
|K_\delta \cap B(x,r)|\le C(K) \, (r/\delta )^\alpha \delta^d 
\end{equation}
for any $x\in\bbR^d $ and $r\ge \delta $.  In particular, we will now assume that the $\alpha$-dimensional sets figuring in \eqref{g_d} all satisfy \eqref{alphaset}. 
With the assumption \eqref{alphaset} in place we will obtain some nontrivial estimates on the upper Minkowski dimension $\dim_M (D)$ of $D$. But first we record another trivial estimate.
Since $|K_\delta |\lesssim \delta^{d-\alpha}$ by \eqref{alphaset}, it follows from $D\subset K\times K$
that $|D_\delta |\lesssim \delta^{2d-2\alpha}$. Thus $\dim_M (D)\le 2\alpha$ and so  
\begin{equation}\label{trivest.}
g_d (\alpha )\le 2\alpha .
\end{equation}

Our first nontrivial bound for $g_d$ concerns large values of $\alpha$:
\begin{theorem}\label{largealpha}
If $(d+1)/2 \le \alpha \le d$, then $g_d (\alpha )= 2\alpha -1$; if $\alpha \le (d+1)/2$, then 
$g_d (\alpha )\le \alpha +(d-1)/2$.   
\end{theorem}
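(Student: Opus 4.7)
The theorem splits into a lower bound (for $(d+1)/2 \le \alpha \le d$) and two upper bounds; the plan is to obtain the former from the Cartesian-product example $K = C \times [0,2]$ already displayed before the statement, and both of the latter from a single $L^2$ spherical-averaging estimate on $|D_\delta|$. For the lower bound I would take $C \subset B(0, 1/2) \subset \mathbb{R}^{d-1}$ to be a Katz--Tao $(\alpha - 1)$-set (a standard self-similar Cantor construction produces such $C$ whenever $0 \le \alpha - 1 \le d - 1$); the product $K = C \times [0,2]$ is then readily checked to satisfy \eqref{alphaset} for $\alpha \ge 1$, which covers the desired range $(d+1)/2 \le \alpha \le d$, and the computation already shown in the introduction delivers $\dim_M D \ge 2\alpha - 1$.

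For the upper bound I would pass to $D_\delta$. Since $(x_1, x_2) \in D_\delta$ forces $x_1, x_2 \in K_{C\delta}$ and $\bigl||x_2 - x_1| - 1\bigr| \le C\delta$, writing $\sigma_\delta$ for the $\delta$-thickened normalized surface measure on $S^{d-1}$ yields
\begin{equation*}
|D_\delta| \lesssim \delta \iint \chi_{K_\delta}(x)\, \chi_{K_\delta}(x + y)\, \sigma_\delta(y)\, dx\, dy = \delta \int \hat\sigma_\delta(\xi)\, |\hat\chi_{K_\delta}(\xi)|^2\, d\xi.
\end{equation*}
The classical decay $|\hat\sigma(\xi)| \lesssim (1+|\xi|)^{-(d-1)/2}$ (with rapid decay of the $\delta$-regularization past frequency $\delta^{-1}$) then bounds the right side, up to negligible low-frequency error, by $\delta \cdot I_\delta$, where
\begin{equation*}
I_\delta := \iint \chi_{K_\delta}(x)\, \chi_{K_\delta}(y)\, |x-y|^{-(d+1)/2}\, dx\, dy
\end{equation*}
is the Riesz-type energy of $\chi_{K_\delta}$ at exponent $(d+1)/2$.

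The core of the proof is estimating $I_\delta$ via \eqref{alphaset}. For fixed $x \in K_\delta$, the inner integral over $|x-y| \le \delta$ contributes $\lesssim \delta^{(d-1)/2}$, and the dyadic annulus $|x-y| \sim 2^j \delta$ contributes $\lesssim |K_\delta \cap B(x, 2^{j+1}\delta)|\,(2^j \delta)^{-(d+1)/2} \lesssim 2^{j(\alpha - (d+1)/2)}\, \delta^{(d-1)/2}$. Summing over $0 \le j \lesssim \log(1/\delta)$: when $\alpha > (d+1)/2$ the geometric series is dominated by the term at $2^j \sim \delta^{-1}$, giving $\delta^{d-\alpha}$; when $\alpha < (d+1)/2$ it is dominated by $j = 0$, giving $\delta^{(d-1)/2}$. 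Integrating in $x$ against $|K_\delta| \lesssim \delta^{d-\alpha}$ and multiplying by $\delta$ yields $|D_\delta| \lesssim \delta^{2d - (2\alpha - 1)}$ in the first regime and $|D_\delta| \lesssim \delta^{2d - (\alpha + (d-1)/2)}$ in the second, which are exactly the asserted Minkowski-dimension bounds.

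The main obstacle will be the bookkeeping around the Fourier reduction: justifying the pointwise estimate on $\hat\sigma_\delta$ uniformly in $\delta$, replacing $(1+|\xi|)^{-(d-1)/2}$ by $|\xi|^{-(d-1)/2}$ cleanly enough to use the standard Fourier representation of the Riesz potential, and absorbing the logarithmic factor that appears at the borderline $\alpha = (d+1)/2$ (harmless for Minkowski dimension, since both bounds coincide at $d$ there).
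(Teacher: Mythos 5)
Your proof is correct, and it lands exactly on the exponents in the theorem; the argument is the Plancherel dual of the paper's. The paper also goes through $\langle 1_{K_\delta}\ast 1_{K_\delta},\sigma\rangle$ and the decay $|\hat\sigma(\xi)|\lesssim (1+|\xi|)^{-(d-1)/2}$, but it works entirely on the frequency side: it mollifies by a Schwartz cutoff $\rho_\delta$ (to localize $\widehat{1_{K_\delta}\ast\rho_\delta}$ to $|\xi|\lesssim 1/\delta$), proves the weighted $L^2$ estimate $\int|\widehat{1_{K_\delta}\ast\rho_\delta}|^2\,|\xi|^{-(d-\alpha)}\,d\xi\lesssim\log(1/\delta)\,\delta^{2(d-\alpha)}$ by a dyadic decomposition in $|\xi|$, and then compares the weight $|\xi|^{-(d-1)/2}$ to $|\xi|^{-(d-\alpha)}$, either trivially when $\alpha\ge(d+1)/2$ or at the cost of a factor $\delta^{\alpha-(d+1)/2}$ when $\alpha\le(d+1)/2$. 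You instead convert to the physical-space Riesz energy $\iint \chi_{K_\delta}(x)\chi_{K_\delta}(y)|x-y|^{-(d+1)/2}\,dx\,dy$ and run the dyadic decomposition in $|x-y|$ directly using \eqref{alphaset}. That buys you a cleaner bookkeeping (no mollifier $\rho$, no separate lemma for $\|1_{K_\delta}\ast\rho_r\|_2$, and the two regimes emerge from the convergence/divergence of a single geometric sum rather than from a weight comparison); the paper's version has the small advantage of isolating the statement ``normalized Lebesgue measure on $K_\delta$ has $\alpha$-dimensional Fourier energy,'' which is a reusable fact. One small caveat in your reduction: since $\hat\sigma_\delta$ is not a genuine power law, you cannot literally invoke the Riesz-potential identity; the low-frequency piece $|\xi|\lesssim 1$ must be treated separately, but it contributes at most $\delta\,\|\chi_{K_\delta}\|_1^2\lesssim \delta^{2(d-\alpha)+1}$, which is dominated by both target bounds, so the gap is harmless. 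The lower bound via $K=C\times[0,2]$ is exactly the example the paper invokes.
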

\noindent The proof uses the Fourier transform. 
The second statement of Theorem \ref{largealpha} is only interesting when $\alpha +(d-1)/2$ is less than the $2\alpha$ in \eqref{trivest.} and so only when $\alpha >(d-1)/2$.
On the other hand, the first statement of Theorem \ref{largealpha} shows that
the conjecture \eqref{largealpha.} is correct for $\alpha\ge (d+1)/2$. In particular, and in contrast to the discrete unit distance problem, when $\alpha$ is sufficiently large there are positive results 
available in $\bbR^d$ even when $d\ge 4$. But the same example
which rules out positive results on the discrete unit distance problem for $d\ge 4$ can be easily modified to show that there are no 
nontrivial results on the continuous problem when $d\ge 4$ and $\alpha$ is small. In particular we have the following statement.
\begin{equation}\label{example1}
\text{If $d\ge 4$ and $\alpha \le \lfloor d/2 \rfloor -1$, then $g_d (\alpha )=2\alpha$.}
\end{equation}
(To see why \eqref{example1} is true, first note that the inequality 
$g_{d+1}(\alpha )\ge g_d (\alpha )$ shows that it is enough to consider only the case when $d$ is even. In this case let $\tilde K$
be an appropriate $\alpha$-dimensional subset of $S^{ d/2  -1}\subset\bbR^{d/2}$ and define $K$ by
$$
K=2^{-1/2}\{(\tilde k_1 ,0),\, (0,\tilde k_2 )\in \bbR^{ d/2}\times \bbR^{ d/2}: \tilde k_1 ,\tilde k_2 \in \tilde K \}.)
$$
If $d\ge 4$ and $\alpha\in (\lfloor \frac{d}{2}\rfloor -1 ,\frac{d-1}{2})$ we do not know if the trivial estimate 
\eqref{trivest.} can be improved.

For $d=2$ or $d=3$ we have the following theorems, which contain nontrivial results for small $\alpha$. 
\begin{theorem}\label{d=2}
For $0<\alpha \le 1$ we have 
\begin{equation*}\label{2dest}
\frac{3\alpha }{2}\le g_2 (\alpha )\le \min \Big\{\frac{5\alpha}{3} ,\frac{\alpha (2+\alpha )}{1+\alpha}\Big\}. 
\end{equation*}
Additionally, for $1\le\alpha \le 3/2$ we have $g_2 (\alpha )=\alpha +1/2$ and for 
$3/2\le\alpha\le 2$ we have $g_2 (\alpha )=2\alpha -1$.
\end{theorem}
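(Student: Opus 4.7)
The plan naturally splits by $\alpha$-range. For $\alpha\in[3/2,2]$, the first statement of Theorem~\ref{largealpha} gives $g_2(\alpha)\le 2\alpha-1$, matched from below by the example $K=C\times[0,2]$ with $\dim C=\alpha-1\ge 1/2$ already analyzed in the introduction. For $\alpha\in[1,3/2]$, the second statement of Theorem~\ref{largealpha} gives the upper bound $g_2(\alpha)\le\alpha+1/2$. The plain $C\times[0,2]$ example yields only $2\alpha-1<\alpha+1/2$ in this range, so for the matching lower bound I would try a hybrid $K=(C\times[0,2])\cup K'$, where $C$ is an $(\alpha-1)$-set and $K'$ is a one-dimensional piece placed so that unit-distance pairs between $K'$ and $C\times[0,2]$ push $\dim D$ up to $\alpha+1/2$ without inflating $\dim K$ beyond $\alpha$.

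For $\alpha\in(0,1]$ the upper bound splits into two independent arguments. I would discretize $K_\delta$ into a maximal $\delta$-separated set $P$ with $|P|\sim\delta^{-\alpha}$ and bound the number $I(\delta)$ of $\delta$-approximate unit-distance pairs in $P$; then $|D_\delta|\lesssim I(\delta)\,\delta^{4}$, so any estimate $I(\delta)\lesssim \delta^{-s}$ yields $\dim_M D\le s$. The bound $I(\delta)\lesssim\delta^{-\alpha(2+\alpha)/(1+\alpha)}$ should come from a K\H{o}v\'ari--S\'os--Tur\'an double-count: dyadically decompose in the separation of $(p_1,p_2)$, use the fact that at each fixed scale two distinct points lie on at most $O(1)$ $\delta$-thickened unit circles, and optimize against \eqref{alphaset}. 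The sharper-for-small-$\alpha$ bound $I(\delta)\lesssim\delta^{-5\alpha/3}$ should instead come from a Spencer--Szemer\'edi--Trotter cell decomposition applied to $P$ and the $\delta$-thickened unit circles centered at $P$, again split by scale of $|p_1-p_2|$ and combined with \eqref{alphaset} at each scale.

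For the lower bound $g_2(\alpha)\ge 3\alpha/2$ on $(0,1]$ I would construct an $\alpha$-set $K\subset\mathbb R^2$ out of a carefully chosen one-dimensional Katz--Tao $\alpha$-set together with extra translational structure forcing many \emph{exact} unit-distance pairs. The naive parallel-strip configuration $K=(K_0\times\{0\})\cup(K_0\times\{1\})$ gives only $\dim_M D=\alpha$, because the $\delta$-approximate pairs that appear through the $\sqrt\delta$-window in the $x$-direction are not realized by any exact unit-distance pair in $D$; the construction must be refined so that $K_0$ contains enough arithmetic/self-similar structure for those approximate pairs to be realized exactly up to a Minkowski thickening, yielding $|D_\delta|\gtrsim\delta^{4-3\alpha/2}$.

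The main obstacle I anticipate is the $5\alpha/3$ upper bound: porting the planar SST incidence theorem into the $\delta$-fattened $\alpha$-set setting requires controlling degenerate families of $\delta$-thickened unit circles (near-tangent and near-coincident) that the classical discrete proof handles for free. On the lower-bound side the delicate point is producing the explicit $\alpha$-set $K_0$ with enough internal additive structure to saturate $3\alpha/2$; once such a construction is in hand, both the Minkowski-dimension count and the $\alpha(2+\alpha)/(1+\alpha)$ upper bound should be comparatively routine bookkeeping against \eqref{alphaset}.
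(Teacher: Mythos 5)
Your handling of $\alpha\in[1,2]$ via Theorem~\ref{largealpha} is on target for the upper bounds, and the $C\times[0,2]$ example does give the matching lower bound on $[3/2,2]$. Everything else, however, diverges from the paper's route or is left unsubstantiated. The two lower bounds $g_2\ge 3\alpha/2$ on $(0,1]$ and $g_2\ge\alpha+1/2$ on $[1,3/2]$ are not treated separately in the paper: both come from a single lemma built on a product $K=(A\cup(A+1))\times B$ of Cantor sets $C(p,q)$ of dimensions $\beta$ and $\gamma$ with $\beta+\gamma=\alpha'$, whose content is $|D^{\delta_n}|\gtrsim\delta_n^{4-(\beta+3\gamma/2)}$ along a sequence $\delta_n=2^{-2q_1q_2n-2}$ chosen so that $\sqrt{\delta_n}$ is itself a Cantor scale for $B$; taking $\beta\to0$ gives $3\alpha/2$ and taking $\gamma\to1$ gives $\alpha+1/2$. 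Your hybrid $(C\times[0,2])\cup K'$ is not the construction and as stated gives no mechanism for satisfying \eqref{alphaset} at exponent $\alpha$ or for producing $\dim D\ge\alpha+1/2$, and the phrase "extra arithmetic/self-similar structure" for the $3\alpha/2$ bound leaves the construction a black box. You do, correctly, identify the exact-versus-approximate issue (the naive parallel strip only gives $\dim_M D=\alpha$); resolving it is precisely what the alignment of the Cantor scales in \eqref{L0}--\eqref{L1} accomplishes.

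The upper bounds on $(0,1]$ also come from different arguments than the ones you sketch. The bound $5\alpha/3$ is not obtained from Spencer--Szemer\'edi--Trotter. Instead, after dyadically pigeonholing the annular section measure to a level $\lambda$, the paper forms the set $V$ of triples $(c_n,a_1,a_2)$ with $a_1,a_2$ lying in the discretized $\delta$-annulus around $c_n$ and separated by at least $c(\lambda/\delta^{2-\alpha})^{1/\alpha}$, lower-bounds $|V|\gtrsim N(\lambda/\delta^2)^2$, and upper-bounds $|V|$ by showing that the projection $(c_n,a_1,a_2)\mapsto(a_1,a_2)$ has fibers of size $\lesssim\delta^{2-\alpha}/\lambda$ because $c_n$ must lie in the two tiny components of $A(a_1,3\delta)\cap A(a_2,3\delta)$. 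This is double counting in the style of Theorem~\ref{discretetheorem} (and Katz--Tao), and it sidesteps entirely the near-tangency degeneracies you correctly flag as the obstacle to a fattened SST approach --- so the difficulty you anticipate is not "to be overcome" but simply not present on the paper's route. Likewise $\alpha(2+\alpha)/(1+\alpha)$ does not come from a K\H{o}v\'ari--S\'os--Tur\'an count; it comes from passing to an $r$-separated subfamily of centers with $r=C\delta^2/\lambda$, applying $|\cup E_n|\ge\sum|E_n|-\sum_{n_1<n_2}|E_{n_1}\cap E_{n_2}|$ with $E_n=A(\tilde c_n,\delta)\cap K_\delta$, and using the annulus-intersection bound $|A(\tilde c_{n_1},\delta)\cap A(\tilde c_{n_2},\delta)|\lesssim\delta^2/(\delta+|\tilde c_{n_1}-\tilde c_{n_2}|)$ together with the $\alpha$-set hypothesis --- a second-moment/C\'ordoba-type argument, for which the reduction to $\mathrm{diam}(K)\le2-\eta$ (to avoid external tangencies) is an essential first step that your proposal does not mention.
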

\noindent Except for the fact that 
$g_2 (\alpha )\ge \alpha +1/2$ when $1\le \alpha \le 3/2$, the second statement here is a 
consequence of Theorem \ref{largealpha}.
Parts of the proofs of Theorem \ref{d=2} and of Theorem \ref{d=3} below employ incidence geometry in the continuous setting - see \cite{S} for other examples.

\begin{theorem}\label{d=3}
We have 
$g_3 (\alpha )\le \frac{15\alpha}{8}$.
\end{theorem}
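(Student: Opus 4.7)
The plan is to $\delta$-discretize the problem. Cover $K$ by $\delta$-balls centered at a maximal $\delta$-separated subset $\{p_i\}\subset K$; the regularity hypothesis \eqref{alphaset} gives $\#\{p_i\} \lesssim \delta^{-\alpha}$, and a standard argument shows that $|D_\delta|$ is comparable (up to a factor of $\delta^6$) to $\mathcal{N}(\delta) := \#\{(i,j):|p_i-p_j| \in [1-\delta, 1+\delta]\}$. So $\dim_M(D) \le 15\alpha/8$ reduces to $\mathcal{N}(\delta) \lesssim \delta^{-15\alpha/8}$. The trivial bound is $\mathcal{N}(\delta) \lesssim \delta^{-2\alpha}$, so a saving of $\delta^{\alpha/8}$ is required. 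Note that $15\alpha/8 < \alpha+1$ exactly when $\alpha\le 8/7$, so the content of Theorem \ref{d=3} lies in this small-$\alpha$ regime, where the Fourier method behind Theorem \ref{largealpha} is inefficient and incidence geometry is the natural substitute.

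Each unit-distance pair $(p_i,p_j)$ is an incidence between the point $p_i$ and the unit sphere centered at $p_j$, so the bound becomes a $\delta$-discretized point-sphere incidence estimate in $\bbR^3$ for an $\alpha$-regular family. Following the continuous-incidence framework alluded to by the authors, I would introduce a two-plane decomposition and apply Theorem \ref{d=2} in slices. The identity
\[
\frac{15}{8} \;=\; \frac{5}{8}\cdot 2 + \frac{3}{8}\cdot \frac{5}{3}
\]
suggests splitting pairs by a scale parameter $\theta$: pairs whose displacement vector lies within angle $\theta$ of a chosen 2-plane are \emph{planar} and are counted within the slice using the 2D exponent $5\alpha/3$ from Theorem \ref{d=2}; pairs whose displacement is more transverse are \emph{generic} and are estimated by the trivial count. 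A good choice of $\theta$ balances the two contributions, producing the $15\alpha/8$ exponent.

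The main obstacles will be (i) producing 2-plane slices of $K$ that inherit enough $\alpha$-regularity in $\bbR^2$ for Theorem \ref{d=2} to apply with the correct effective dimension --- the $\alpha$-set condition \eqref{alphaset} passes to slices only after an averaging/pigeonhole step, and the effective 2D dimension of a thickened slice is not simply $\alpha-1$; and (ii) controlling the multiplicity with which each unit-distance pair is counted as the slicing direction varies, which must be done precisely so as not to lose the target exponent. A cleaner alternative, if available, would be to prove a direct continuous point-sphere incidence bound in $\bbR^3$ with exponent matching $15/8$, bypassing the two-plane slicing altogether.
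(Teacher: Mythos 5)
Your proposal is a sketch of an approach, not a proof, and you acknowledge this yourself in the final paragraph by flagging two unresolved obstacles; as it stands there is a genuine gap.

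The paper does not slice and does not invoke Theorem \ref{d=2} as a black box. Its proof of Theorem \ref{d=3} is a direct three-dimensional rerun of the \emph{argument} used for the $g_2(\alpha)\le 5\alpha/3$ bound, not of its \emph{statement}. After the same pigeonhole that isolates a popularity level $\lambda$ and $N$ disjoint $\delta$-balls $B(c_n,\delta)$ whose surrounding annuli $A(c_n,\delta)$ each capture $\gtrsim\lambda$ of $K_\delta$, one forms the energy set
\[
V=\{(c_n,a_1,a_2,a_3):a_i\in J\cap A(c_n,3\delta),\ |a_i-a_j|\gtrsim(\lambda/\delta^{3-\alpha})^{1/\alpha}\}
\]
and double-counts it: the lower bound $|V|\gtrsim N(\lambda/\delta^3)^3$ follows from $\alpha$-regularity, and the upper bound comes from showing that the projection $(c_n,a_1,a_2,a_3)\mapsto(a_1,a_2,a_3)$ has controlled multiplicity. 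That multiplicity bound is the heart of the proof and is purely a geometric lemma about the intersection $I=A(a_1,3\delta)\cap A(a_2,3\delta)\cap A(a_3,3\delta)$ of three thickened unit spheres: intersecting two of them localizes $I$ near a slab around a plane $P_{1,2}$ of thickness $\delta/|a_2-a_1|$, three of them localize to an extrusion of a small parallelogram, and then --- crucially --- a square-root subadditivity estimate exploiting the curvature of the sphere ($\bigl|\sqrt{(1+\epsilon_1)^2-\rho_1^2}-\sqrt{(1+\epsilon_2)^2-\rho_2^2}\bigr|\lesssim\sqrt{|\epsilon_1-\epsilon_2|}+\sqrt{|\rho_1^2-\rho_2^2|}$) caps the diameter of $I$ along the extrusion direction by $\delta^{1/2}(\delta^{3-\alpha}/\lambda)^{1/\alpha}$. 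It is precisely this extra half-power of $\delta$ from curvature that produces the final exponent $15\alpha/8$ after interpolation with the trivial bound. Your two-plane decomposition with the weight identity $15/8=\tfrac58\cdot 2+\tfrac38\cdot\tfrac53$ is numerology that does not obviously correspond to a balancing of two terms in a slicing argument, and your own objections (i) and (ii) --- that slices of an $\alpha$-set are not $\alpha$-sets of the right effective dimension, and that multiplicity over slicing directions is hard to control --- are exactly the places the slicing plan would fall apart. The ``cleaner alternative'' you mention in your last sentence (a direct continuum point-sphere incidence estimate in $\bbR^3$) is essentially what the paper does, but without the specific triple-annulus diameter lemma and its curvature gain there is no route to $15/8$.
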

\noindent We note that, in addition to improving \eqref{trivest.} and 
improving \eqref{trivbd} for $\alpha <16/7$,  
the estimate in Theorem \ref{d=3} improves the second bound in Theorem \ref{largealpha} when $\alpha\le 8/7$.

\section{Proofs}\label{proofs}

{\it Proof of Theorem \ref{discretetheorem}:}
Modifying \eqref{Gdef} to fit the context of Theorem \ref{discretetheorem} gives
\begin{equation*}
G=\{(p,b): p\in P,\, b\in S^{d-1},\,  p+b\in P\}.
\end{equation*}
The correspondence $(p_1 ,p_2 )\longleftrightarrow (p_1 ,b)\dot=(p_1 ,p_2 -p_1 )$ shows that \eqref{result} is equivalent to 
\begin{equation}\label{result'}
\big|G\big| \leq C_d \, |P|^{(2d-1)/d}.
\end{equation}
Define
\begin{equation*}\label{Vdef}
V\dot= \{(p,b_1 ,\dots ,b_d ):(p,b_j )\in G,\, j=1,\dots ,d\}.
\end{equation*}
Then \eqref{result'} is a consequence of the two inequalities
\begin{equation}\label{result1}
\frac{|G|^d}{|P|^{d-1}}\le |V|
\end{equation}
and
\begin{equation}\label{result2}
|V|\leq C_d \, |P|^d .
\end{equation}
Inequality \eqref{result1} follows from a H\"older's inequality argument in the spirit of \cite{KT}:
\begin{equation*}\label{ineq1}
|G|=\sum_{p\in P,|b|=1}\chi_{G}(p,b)\leq\, 
\Big(\sum_{p\in P} \big(\sum_{|b|=1} \chi_G (p,b)\big)^d \Big)^{1/d} \,\big|P|^{(d-1)/d}.
\end{equation*}
To see \eqref{result2}, write $V$ as the disjoint union $V' \cup V''$ where $V'$ is the subset of $V$
consisting of all $(p,b_1 ,\dots ,b_d )$ for which $b_i =b_j$ for some $i\not= j$. Since $(p,b)\in G$ 
implies $b\in P-p$, it is clear that 
\begin{equation*}
|V' |\leq C_d \, |P|^d .
\end{equation*}
To obtain a similar estimate for $V''$, consider the mapping 
\begin{equation*}
\Phi:(p,b_1 ,\dots ,b_d )\mapsto (p+b_1 ,\dots ,p+b_d )
\end{equation*}
of $V''$ into $P^d$. It will be enough to show that $\Phi$ is at most two-to-one. 
Since $(p,b)\in G$ implies $b\in P-p$, it follows from 
 $(p,b_1 ,\dots ,b_d )\in V''$ that there are distinct $p_1 ,\dots ,p_d \in P$ such that
\begin{equation*}
(b_2 -b_1 ,\dots ,b_d -b_1 )=(p_2 -p_1 ,\dots ,p_d -p_1 )\dot= (a_2 ,\dots ,a_d ).
\end{equation*}
Our hypothesis concerning affine independence implies that the vectors $a_2 ,\dots ,a_d $
are linearly independent.
Next, suppose that 
\begin{equation*}
\Phi(p',b'_1 ,\dots ,b'_d )=\Phi(p,b_1 ,\dots ,b_d ).
\end{equation*}
Then
\begin{equation*}
b'_j -b'_1 =(p'+b'_j )-(p'+b'_1 )=(p +b_j )-(p +b_1 )=a_j
\end{equation*}
for $j=2,\dots ,d$. The desired multiplicity estimate for $\Phi$ now follows from Lemma \ref{lemma1} 
below (an analog  of the fact that there are at most two chords of a circle which are congruent under translation).

\begin{lemma}\label{lemma1}
Suppose that $a_2 ,\dots ,a_d \in \bbR ^d$ are linearly independent. Then there are at most two $d$-tuples
$(b_1 ,\dots ,b_d )$ with $b_j \in\bbR^d$ such that 
\begin{equation}\label{bcond}
|b_1 |=\cdots =|b_d|=1,\text{and }b_j -b_1 =a_j  ,\  j=2,\dots ,d.
\end{equation}
\end{lemma}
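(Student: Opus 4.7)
The plan is to reduce the problem to counting intersections of a line with the unit sphere. Since $b_j = b_1 + a_j$ for $j = 2, \dots, d$, the entire tuple $(b_1, \dots, b_d)$ is determined once $b_1$ is chosen, so it suffices to bound the number of admissible $b_1$.

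First I would translate the norm conditions $|b_j| = 1$ into linear conditions on $b_1$. Expanding $|b_1 + a_j|^2 = 1$ together with $|b_1|^2 = 1$ yields
\begin{equation*}
b_1 \cdot a_j = -\tfrac{1}{2}|a_j|^2, \qquad j = 2, \dots, d.
\end{equation*}
These are $d-1$ linear equations in $b_1 \in \mathbb{R}^d$. Since $a_2, \dots, a_d$ are linearly independent by hypothesis, the coefficient matrix has rank $d-1$, and the solution set is an affine line $\ell \subset \mathbb{R}^d$.

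The remaining constraint $|b_1| = 1$ says that $b_1$ lies on the unit sphere $S^{d-1}$, and any line meets $S^{d-1}$ in at most two points. Hence there are at most two admissible choices of $b_1$, and therefore at most two tuples $(b_1, \dots, b_d)$ satisfying \eqref{bcond}.

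There is no real obstacle here; the only thing to be careful about is checking that the linear equations really do cut out a line (i.e.\ are consistent and of the correct rank), which follows directly from the linear independence of $a_2, \dots, a_d$. The geometric picture matches the parenthetical remark in the text: the two $b_1$'s correspond to the endpoints of the two chords of $S^{d-1}$ that are congruent by translation to a given chord configuration.
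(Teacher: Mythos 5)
Your proof is correct, and it takes a cleaner route than the paper. You reduce the $d$ norm constraints plus the translation relations to the linear system $b_1\cdot a_j=-\tfrac12|a_j|^2$ together with $|b_1|=1$: linear independence of $a_2,\dots,a_d$ makes the system cut out an affine line (parallel to the normal of $H=\operatorname{span}(a_2,\dots,a_d)$), and a line meets $S^{d-1}$ in at most two points. The paper instead argues geometrically in two stages: it first pins $b_1$ down to one of two hyperplanes $t_1v+H$, $t_2v+H$ by observing that $(tv+H)\cap S^{d-1}$ must be a $(d-2)$-sphere of the same radius $r_0$ as the unique $(d-2)$-sphere through $\{0,a_2,\dots,a_d\}$, which happens for exactly two $t$; then it shows each of those slices admits at most one admissible $b_1$, again by uniqueness of that circumscribed $(d-2)$-sphere. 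Your algebraic argument collapses both stages into one, and in particular avoids having to invoke twice the fact that $d$ affinely independent points determine a unique $(d-2)$-sphere. The two proofs are of course compatible: the line you produce is exactly $p_0+\mathbb{R}v$ with $v\perp H$, and its two intersections with $S^{d-1}$ are the two points singled out by the paper's $t_1,t_2$.

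One small remark worth keeping explicit: the linear system is automatically consistent because the functionals $b\mapsto b\cdot a_j$ are linearly independent (so $b\mapsto(b\cdot a_2,\dots,b\cdot a_d)$ is onto $\mathbb{R}^{d-1}$), but even if one did not observe this, an empty solution set would only strengthen the conclusion, so nothing is lost.
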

\begin{proof}
Let $H$ be the hyperplane in $\bbR^d$ spanned by $a_2 ,\dots ,a_d$ and fix a nonzero vector $v$ with
$v\perp H$.
Our first goal is to prove the following statement:
\begin{multline}\label{2t's}
\text{there is $\{t_1 ,t_2\}\subset\bbR$ depending only on $\{ a_2 ,\dots ,a_d \}$ and $v$} \\  
\text{such that if \eqref{bcond}
holds, then $\{ b_1 ,\dots ,b_d \}\subset (tv+H) \cap S^{d-1}$} \\
\text{for some $t\in\{t_1 ,t_2 \}$.
\ \ \ \ \ \ \ \ \ \ \ \ \ \ \ \ \ \ \ \ \ \ \ \ \ \ \ \ \ \ \ \ \ \ \ \ \ \ \ \ \ \ \ \ \ \ \ \ \ \ \ \ \ \ \ \ \ \ } 
\end{multline}
To see \eqref{2t's} we begin by noting that 
if $w\in\bbR^d$ then the intersection 
$$
(tw+H) \cap S^{d-1}
$$ 
is either a $(d-2)$-sphere or empty.
In particular, if $r_0$ is the radius of the $(d-2)$-sphere determined by $\{0,a_2 ,\dots ,a_d \}$
(the linear independence of $a_2, \ldots, a_d$ guarantees that there is 
exactly one $(d-2)$-sphere containing these points)
then there is $\{t_1 ,t_2 \}\subset \bbR$, depending only on $a_2 ,\dots ,a_d$ and $v$,
such that if $(tv+H) \cap S^{d-1}$ is a $(d-2)$-sphere of radius $r_0$
exactly when $t\in\{t_1 ,t_2 \}$.
Suppose that \eqref{bcond} holds.
Since $(b_1 +H) \cap S^{d-1}$ contains $b_1$,  $(b_1 +H) \cap S^{d-1}$ is a $(d-2)$-sphere. Since  
\begin{equation*}
b_1 +\{ 0,a_2 ,\dots ,a_d \}\subset (b_1 +H) \cap S^{d-1}, 
\end{equation*}
it follows that 
$(b_1 +H) \cap S^{d-1}$ is a $(d-2)$-sphere of radius $r_0$. Then (by \eqref{bcond})
\begin{equation*}
\{b_1 ,\dots ,b_d \}=b_1 +\{ 0,a_2 ,\dots ,a_d \}\subset (b_1 +H) \cap S^{d-1}=(tv+H) \cap S^{d-1}
\end{equation*}
for some  $t\in\{t_1 ,t_2 \}$. This establishes \eqref{2t's}.

Given \eqref{2t's}, the proof of the lemma will be complete if we show that for fixed $t\in\bbR$ there is at most 
one $d$-tuple $(b_1 ,\dots ,b_d )$ such that both
\eqref{bcond} and 
\begin{equation}\label{bcond1}
\{b_1 ,\dots ,b_d \}\subset (tv+H) \cap S^{d-1}
\end{equation}
hold. So suppose that \eqref{bcond} and \eqref{bcond1} hold for $(b_1 ,\dots ,b_d )$ 
and also for $(b'_1 ,\dots ,b'_d )$. Let $r_0$ and $c$
be the radius and center of the $(d-2)$-sphere $(tv+H) \cap S^{d-1}$.
Then the points $\{0,a_2,\ldots,a_d\}$ are contained in the $(d-2)$-spheres 
in $H\cap S^{d-1}$
of radius $r_0$ centered at $c-b_1$ and $c-b'_1$. 
Again appealing to the fact that $\{0,a_2,\ldots, a_d\}$ determines a unique $(d-2)$-sphere we see that $b_1 = b'_1$ and hence $(b_1,\ldots,b_d) = (b'_1, \ldots, b'_d).$
\end{proof}

{\it Proof of Theorem \ref{largealpha}:}
Recalling the definition \eqref{g_d} of $g_d$, we will bound $g_d$ by estimating 
$\dim_M (D)$. Since $\dim_M (D)\le \gamma$ will follow from 
$|D_\delta |=|D+B(0,\delta )|\lesssim \delta^{2d-\gamma-\epsilon}$
for all $\epsilon >0$ and since $D_\delta \subset D^\delta$
where $D^\delta$ is defined by  
\begin{equation*}
D^\delta \doteq \{(k_1 ,k_2 )\in K_\delta \times K_\delta : 1-2\delta\le |k_2 -k_1 |\le 1+2\delta \},
\end{equation*}
we will be interested in estimating $|D^\delta |$.
Without loss of generality assume that $K=-K$ and write
\begin{equation}\label{Dest}
|D^\delta |=\int_{K_\delta}\int_{K_\delta}1_{A(0,\delta )}(x_2 -x_1 )\, dx_1 \, dx_2 =
\langle 1_{K_\delta}\ast 1_{K_\delta},1_{A(0,\delta )}\rangle
\end{equation}
where, for $c\in\bbR^d$,  $A(c,\delta ) =\{x\in\bbR^d :1-2\delta \le |x-c|\le 1+2\delta \}$. Let $\rho$ be a symmetric Schwartz function  with 
\begin{equation}\label{rhoprops}
1_{B(0,C)}\lesssim |\hat\rho |\lesssim 1_{B(0,2C)},\ 
1_{B(0,C' )}(x)\le \rho (x) 
\lesssim \sum_{j=1}^\infty 2^{-jd}\,1 _{B(0,2^j )}(x) .
\end{equation}
Write $\sigma$ for Lebesgue measure on $S^{d-1}$. 
If $\rho_r (x)=r^{-d}\, \rho (x/r)$, and if $C'$ is chosen appropriately, then 
\begin{multline}\label{Dest2}
|D^\delta |\lesssim \delta\,\langle 1_{K_\delta}\ast1_{K_\delta},\rho_\delta  \ast 
\rho_\delta \ast \sigma\rangle =\delta\,
\langle (1_{K_\delta}\ast\rho_\delta)\ast (1_{K_\delta}\ast\rho_\delta), \sigma\rangle  \lesssim \\
\delta\, \int_{B(0,2C/\delta )}
\big| \widehat {1_{K_\delta}\ast \rho_\delta}(\xi )\big|^2 \, \frac{d\xi}{1+|\xi |^{(d-1)/2}}.
\end{multline}
We will control the last integral by estimating $\|1_{K_\delta}\ast \rho_\delta \|_2$ 
and we begin by estimating $\|1_{K_\delta}\ast \chi_{B(0,r)}\|_2$ for $r\ge\delta$.
Using \eqref{alphaset} we have 
$$
\|1_{K_\delta}\ast 1_{B(0,r)}\|_1 
\lesssim r^d \, \delta^{d-\alpha},\ \|1_{K_\delta}\ast 1_{B(0,r)}\|_\infty
\lesssim r^\alpha \delta^{d-\alpha},
$$
and so 
\begin{equation}\label{est} 
\|1_{K_\delta}\ast 1_{B(0,r)}\|_2 \lesssim r^{(d+\alpha )/2}\,\delta^{d-\alpha},\ r\ge \delta .
\end{equation}
Then \eqref{rhoprops} and \eqref{est} show that
for $r\ge \delta$ we have 
\begin{equation}\label{est2}
\|1_{K_\delta}\ast \rho_r\|_2 \lesssim r^{(\alpha -d)/2}\delta^{d-\alpha}.
\end{equation}
Since $\hat{\rho_r}$ is supported on $B(0,C/r)$, \eqref{est2} implies
\begin{equation*}
\int\limits_{\small\frac{C}{2r}\le |\xi |\le \small\frac{C}{r}}
\big| \widehat {1_{K_\delta}\ast \rho_\delta}(\xi )\big|^2 \, d\xi \lesssim
\int\limits_{\small\frac{C}{2r}\le |\xi |\le \small\frac{C}{r}}
\big| \widehat {1_{K_\delta}\ast \rho_r}(\xi )\big|^2 \, d\xi\lesssim r^{\alpha -d}\, \delta^{2(d-\alpha )},\ r\ge \delta .
\end{equation*}
Thus 
\begin{multline}\label{est3}
\int\big| \widehat {1_{K_\delta}\ast \rho_\delta}(\xi )\big|^2 \, \frac{d\xi}{|\xi |^{d-\alpha}}=
\int\limits_{|\xi |\le \small\frac{2C}{\delta}}
\big| \widehat {1_{K_\delta}\ast \rho_\delta}(\xi )\big|^2 \, \frac{d\xi}{|\xi |^{d-\alpha}}= \\
\int\limits_{\{|\xi |\le C\}}
\big| \widehat {1_{K_\delta}\ast \rho_\delta}(\xi )\big|^2 \, \frac{d\xi}{|\xi |^{d-\alpha}}+
\sum\limits_{\small\frac{1}{2}\le 2^j \le\small\frac{1}{\delta}}
\int\limits_{\small\frac{C}{2^{j+1} \delta}\le |\xi |\le \small\frac{C}{2^{j}\delta}}
\big| \widehat {1_{K_\delta}\ast \rho_\delta}(\xi )\big|^2 \, \frac{d\xi}{|\xi |^{d-\alpha}}\lesssim \\
\delta^{2(d-\alpha )}+\sum\limits_{\small\frac{1}{2}\le 2^j \le\small\frac{1}{\delta}}
\Big(\frac{C}{2^j \delta}\Big)^{\alpha -d}\big(2^j \delta \big)^{\alpha -d}\,\delta^{2(d-\alpha )}\lesssim 
\log (\small\frac{1}{\delta}\big)\,\delta^{2(d-\alpha )}.
\end{multline}
(So normalized Lebesgue measure on $K_\delta$ behaves like an $\alpha$-dimensional measure from the Fourier transform point of view.) We will use \eqref{est3} to estimate $|D^\delta |$ via \eqref{Dest2} 
and thus to obtain the upper bounds on $g_d$ in Theorem \ref{largealpha}.  
If $\alpha\ge (d+1)/2$, so that $(d-1)/2 \ge d-\alpha$, then the integral in \eqref{Dest2} is dominated by the integral estimated in \eqref{est3}. That leads to 
$|D^\delta |\lesssim \log (\small\frac{1}{\delta}\big)\,\delta^{2(d-\alpha )+1}$ and so, by the remarks at the beginning of this proof, to $g_d (\alpha )\le 2\alpha -1$. With the example described 
after \eqref{largealpha.}, this gives 
$g_d (\alpha )= 2\alpha -1$. If $\alpha\le(d+1)/2$, then when $|\xi |\le C/\delta$ we have
$$
\frac{1}{|\xi |^{(d-1)/2}}
\lesssim\frac{ \delta^{\alpha -(d+1)/2}}{|\xi |^{d-\alpha}}
$$
which leads as above to $g_d (\alpha )\le \alpha +(d-1)/2$.

{\it Proof of Theorem \ref{d=2}:}
We begin by claiming that it is enough to prove the upper bounds for $g_2 (\alpha )$ under the additional assumption that 
\begin{equation*}\label{diam}
\text{diam} (K)\le 2-\eta
\end{equation*}
for some fixed $\eta >0$. (The purpose of this restriction is to avoid the possibility of external
tangencies of certain annuli and thus to allow the use of estimates like \eqref{intersectionest} below.)
To see that this reduction is legitimate, let $\{C_1 ,\dots ,C_7 \}$ be a partition of the unit circle into arcs each having length less than $.9$ and let 
$$
G_i =B(0,1/100) \cup \big(B(0,1/100)+C_i\big) .
$$
Then
$$
D\subset\cup_{k\in K}\cup_{1\le i\le 7}\{(k_1 ,k_2 )\in (k+G_i )^2 \}.
$$
As $D$ is compact, it is contained in some finite union of sets 
$$
\{(k_1 ,k_2 )\in (k+G_i )^2 \}.
$$
Since $\text{diam} \big(k+G_i \big)\le 2-\eta$ for some fixed $\eta >0$, our claim is established.

By renaming $\eta$ and assuming that $\delta>0$ is small enough, we can (and do) assume 
for the remainder of this proof that 
\begin{equation}\label{diam'}
\text{diam} (K_\delta )\le 2-\eta .
\end{equation}

We now turn to the proof of the upper bound 
\begin{equation}\label{UB1}
g_2 (\alpha )\le \frac{\alpha (2+\alpha )}{1+\alpha}
\end{equation}
Under the assumption that $K$ satisfies \eqref{alphaset} for $d=2$, it is enough to establish the estimate
\begin{equation*}
|D^\delta |\lesssim \log (1/\delta )\,\delta ^{4-\alpha (2+\alpha )/(1+\alpha)}. 
\end{equation*}
(Throughout this argument the constants implied by the symbol $\lesssim$ depend only on $K$).
With 
\begin{equation*}\label{Gdelta}
G^\delta = \{(k,y): k\in K_\delta , \, 1-2\delta \le |y|\le 1+2\delta, \, k+y\in K_\delta \}, 
\end{equation*}
the correspondence $(k_1 ,k_2 )\longleftrightarrow (k_1 ,y )\dot= (k_1 ,k_2 -k_1 )$ shows that 
$|D^\delta |=|G^\delta |$. Thus it suffices to show that
\begin{equation}\label{UB1.1}
|G^\delta |\lesssim \log (1/\delta )\,\delta ^{4-\alpha (2+\alpha )/(1+\alpha)}. 
\end{equation}
Recall that $A(0,\delta ) =\{x\in\bbR^d :1-2\delta \le |x|\le 1+2\delta \}$. For $k\in K_\delta$ we will write 
$(G^\delta )_k$ for the $k$-section of $G^\delta$ given by $\{y\in A(0,\delta ):k+y\in K_\delta \}$.
Since $K_\delta$ is a $\delta$-discrete $\alpha$-set we can assume that the two-dimensional 
Lebesgue measure of $(G^\delta )_k$ satisfies $\delta^2 \lesssim |(G^\delta )_k |\lesssim \delta^{2-\alpha}$. 
Find $M\lesssim \log (1/\delta )$ positive numbers $\{\lambda_m \}_{m=1}^M$ such that $\lambda_{m+1}=2\,\lambda_m$
and such that for each $k\in K_\delta$ we have $\lambda _m \le |(G^\delta )_k |\le \lambda_{m+1}$ for some $m$.
Then define 
$$
K^m = \{k\in K_\delta : \lambda _m \le |(G^\delta )_k |\le \lambda_{m+1}\}.
$$
The estimate \eqref{UB1.1} (of the four-dimensional Lebesgue measure of $G^\delta$) will follow from the following estimate of the two-dimensional Lebesgue measure of $K^m$:
\begin{equation}\label{UB1.2}
\lambda_m \,|K^m |\lesssim \delta^{4-\alpha (2+\alpha )/(1+\alpha)}.
\end{equation}

So fix $m$. Choose $N=N(m)$ disjoint balls $B(c_n ,\delta )$ with $c_n \in K_m$ for which 
\begin{equation}\label{UB1.3}
\lambda\doteq \lambda_m\le 
|A(c_n ,\delta ) \cap K_\delta |
\end{equation}
and such that 
\begin{equation}\label{UB1.35}
|K^m |\lesssim N\, \delta^2 .
\end{equation}
Our goal is the estimate 
\begin{equation}\label{UB1.4}
\lambda\,N\,\delta^2 \lesssim \frac{\delta^{4}}{\lambda^{\alpha}} 
\end{equation}
which when interpolated with the trivial estimate
\begin{equation}\label{trivest}
\lambda\,N\,\delta^2 \lesssim \lambda\, \delta^{2-\alpha}
\end{equation}
gives \eqref{UB1.2} via \eqref{UB1.35}.

To prove \eqref{UB1.4} we begin by fixing $r=C\delta^2 /\lambda$. 
Since $|K_\delta \cap B(x,r)|\lesssim (r/\delta )^\alpha \delta^{2}$, any
$B(x,r)$ contains $\lesssim (r/\delta )^\alpha$ of the $B(c_n ,\delta )$'s. Thus there is an $r$-separated  subcollection 
$\{\tilde c_n \}$ containing $\tilde N$ of the  of the $c_n$'s, where 

\begin{equation}\label{Ntilde}
N\delta^\alpha /r^\alpha \lesssim \tilde N  .
\end{equation}
The bound \eqref{UB1.4} will follow from a certain estimate from below of the two-dimensional Lebesgue measure
\begin{equation*}
|\cup_n \big(A(\tilde c_n ,\delta ) \cap K_\delta \big)|. 
\end{equation*}
Part of the strategy here is the general estimate 
\begin{equation}\label{unionest}
|\cup_n E_n |\ge \sum_n |E_n | -\sum_{n_1 <n_2 }|E_{n_1}\cap E_{n_2}|.
\end{equation}
We will take $E_n =A(\tilde c_n ,\delta ) \cap K_\delta$ and use the estimate 
\begin{equation}\label{intersectionest}
|A(\tilde c_{n_1},\delta ) \, \cap \ A(\tilde c_{n_2},\delta )|
\lesssim\frac{\delta^2}{\delta+|\tilde c_{n_1}-\tilde c_{n_2}|}
\end{equation}
(in which the implied constant depends on $\eta$ in \eqref{diam'}\,)
to bound $|E_{n_1}\cap E_{n_2}|$. For this reason we are interested in controlling the quantity 
\begin{equation*}\label{sum}
\sum_{n\not= n_0}\frac{\delta^2}{ |\tilde c_{n}-\tilde c_{n_0}|}.
\end{equation*}
We are assuming that the sets $K_{\delta '}$ are unifomly $\delta '$-discrete - that they satisfy \eqref{alphaset} uniformly in $\delta '$ - and so, in particular, $K_r$ is $r$-discrete. Thus for each $\tilde c_{n_0}$ there are at most 
$C_2 \,  2^{k\alpha}$ of the $r$-separated $\tilde c_n$' s within distance $2^k r$ of $\tilde c_{n_0}$.
Therefore, since $\alpha <1$,
\begin{equation*}
\sum_{n\not= n_0}\frac{\delta^2}{ |\tilde c_{n}-\tilde c_{n_0}|}\lesssim \delta^2  \sum _{k=1}^\infty \frac{2^{k\alpha}}{2^k r}\lesssim 
\frac{\delta^2 }{r}
\end{equation*}
and so 
\begin{equation}\label{sumest}
\sum_{n\not= n_0}\frac{\delta^2}{ |\tilde c_{n}-\tilde c_{n_0}|}\le c\,\lambda
\end{equation}
by our choice of $r$. Thus \eqref{intersectionest} and \eqref{sumest} imply
\begin{equation*}
\sum_{n_1 <n_2 }|A(\tilde c_{n_1},\delta ) \, \cap \ A(\tilde c_{n_2},\delta )|\leq \\
 C'\tilde{N} c\, \lambda  = \tilde{N}c' \lambda.
\end{equation*}
On the other hand, because of \eqref{UB1.3} and \eqref{Ntilde} we have  
\begin{equation*}
\sum_n |A(\tilde c_n ,\delta )\cap K_\delta |\ge \tilde{N} \lambda 
\end{equation*}
and so, by \eqref{unionest}, 
\begin{equation*}
|\cup_n \big(A(\tilde c_n ,\delta )\cap K_\delta \big)|\ge (1 - c') \tilde{N} \lambda \gtrsim
(1 - c') \Big(\frac{N\delta^\alpha}{r^\alpha}\Big) \lambda .
\end{equation*}
If $C$ (figuring in the choice of $r$) is large enough, then $1 - c' >0$ and so this 
last estimate and the fact that 
$|K_\delta |\lesssim \delta^{2-\alpha}$, together with our choice of $r$, yield \eqref{UB1.4}. 
This completes the proof of \eqref{UB1}.

Next we give the proof of the upper bound 
\begin{equation}\label{UB2}
g_2 (\alpha )\le \frac{5\alpha}{3}.
\end{equation}
Part of the argument is analogous to the proof of Theorem \ref{discretetheorem}.
Let $K_m$, $\lambda =\lambda_m$, and the $B(c_n ,\delta ),\,  1\le n\le N,$ be as in the proof of \eqref{UB1}.
Instead of \eqref{UB1.4} we will now prove
\begin{equation}\label{UB2.1}
\lambda\,N\,\delta^2 \lesssim  \frac{\delta^{2+3(2-\alpha )}}{\lambda ^2}.
\end{equation}
As above, interpolation with \eqref{trivest} will then lead to 
\begin{equation*}
|D^\delta |\lesssim \log (1/\delta )\,\delta ^{4-5\alpha /3} 
\end{equation*}
and so to \eqref{UB2}.  

Choose a maximal $\delta$-separated subset $J$ of $K_\delta$. For each $c_n$ let 
\begin{equation*}
S_{c_n}=\{a\in J :1-3\delta \le |a-c_n |\le 1+3\delta \},
\end{equation*}
so that $S_{c_n}$ is like a discretized $c_n$-section of $D^\delta$. 
Define 
\begin {equation*}
V=\big\{(c_n ,a_1 ,a_2 ): 1\le n\le N,\, a_1 ,a_2 \in S_{c_n},\, |a_1-a_2 |\ge 
c\, \Big(\small\frac{\lambda}{\delta^{2-\alpha}}\Big)^{1/\alpha }\big\},
\end{equation*}
where $c$ is a small positive constant. We will prove \eqref{UB2.1} by comparing upper and 
lower estimates for $|V|$. 

Since 
\begin{equation*}
|\{k\in K_\delta : 1-2\delta \le |k-c_n | \le 1+2\delta\}|\ge\lambda
\end{equation*}
by the choice of $c_n$ it follows that $|S_{c_n}|\gtrsim \lambda /\delta^2$. Since \eqref{alphaset} implies that 
\begin{equation*}
|K_\delta \cap B\big(a, c(\lambda / \delta^{2-\alpha })^{1/\alpha}\big)| \lesssim c^\alpha \, \lambda
\end{equation*}
for any $a$, it follows that
\begin{equation}\label{Tlower}
|V|\gtrsim N\,\Big(\frac{\lambda}{\delta^2}\Big)^2 
\end{equation}
if $c$ is small enough.

To obtain an upper bound for $|V|$ we begin by noting that if 
$$
(c_{n_0} ,a_1 ,a_2 ) \in V
$$
then $c_{n_0}$ is in 
\begin{equation}\label{inter}
A(a_1 ,3\delta )\cap A(a_2 ,3\delta ).
\end{equation}
%
Because $|a_1 -a_2 |\le 2-\eta <2$ it follows that if $|a_1 -a_2 |\gtrsim \delta$ then \eqref{inter} is a union of two connected components, one on either side of the line through $a_1$ and $a_2$ and each having diameter bounded above by 
\begin{equation}\label{multiplicity}
C \frac{\delta}{|a_1 -a_2 |}\lesssim \Big(\frac{\delta^2}{\lambda}\Big)^{1/\alpha},
\end{equation}
where the inequality comes from the definition of $V$. The hypothesis \eqref{alphaset} then implies that each connected component of \eqref{inter} contains $\lesssim \delta^{2-\alpha}/\lambda$ points from $\{c_n\}$. Thus the projection 
$$
(c_n ,a_1 ,a_2 )\mapsto (a_1 ,a_2 )
$$
of $V$ into $J\times J$ has multiplicity at most $C\, \delta^{2-\alpha}/\lambda$. Therefore
\begin{equation}\label{Tupper}
|V|\lesssim |J|^2 \, \frac{\delta^{2-\alpha}}{\lambda}\lesssim \delta^{-2\alpha}\, \frac{\delta^{2-\alpha}}{\lambda}.
\end{equation}
Comparison of \eqref{Tlower} and \eqref{Tupper} yields \eqref{UB2.1}. 
This completes the proof of \eqref{UB2}.

To complete the proof of Theorem \ref{d=2} we need to establish the two lower bounds on $g_2 (\alpha )$
\begin{equation}\label{LB1}
\text{$g_2 (\alpha )\ge 3\alpha /2$ if $0<\alpha \le 1$ }
\end{equation}
and
\begin{equation}\label{LB2}
\text{$g_2 (\alpha )\ge \alpha +1/2$ if $1<\alpha \le 3/2$. }
\end{equation}
These will be consequences of the following lemma.
\begin{lemma}
Suppose $0<\beta , \gamma <1$ are rational and let $\alpha '=\beta +\gamma$. There is a compact set 
$K\subset \bbR^2$ which satisfies \eqref{alphaset} with $\alpha '$ instead of $\alpha$ and for which we have
 $|D^\delta |\gtrsim \delta^{4-(\beta +3\gamma /2)}$ for some sequence of $\delta$'s tending to $0$. 
\end{lemma}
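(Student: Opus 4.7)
The plan is to build $K$ as a Cartesian product of Cantor-like sets in $\mathbb{R}$, with one factor containing a built-in unit gap, so that a Pythagorean calculation forces many unit-distance pairs. Since $\beta,\gamma\in\mathbb{Q}\cap(0,1)$, I would choose an integer $m$ whose powers encode both $\beta$ and $\gamma$ and restrict attention to scales $\delta=m^{-2n}$. Let $A_0\subset[0,1/4]$ and $B\subset[0,1/4]$ be Ahlfors-regular self-similar Cantor sets of dimensions $\beta$ and $\gamma$, respectively, constructed so that $|A_{0,\delta}\cap B(x,r)|\sim(r/\delta)^\beta\delta$ and $|B_\delta\cap B(y,r)|\sim(r/\delta)^\gamma\delta$ for $r\ge\delta$. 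Set $A=A_0\cup(A_0+1)$ and $K=A\times B$. Then $K$ is $\alpha'=\beta+\gamma$ dimensional, and a routine product computation yields the $\alpha'$-set condition \eqref{alphaset} for $K$.

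Any pair $(k_1,k_2)=((a_1,b_1),(a_2,b_2))\in K^2$ with $|k_1-k_2|\in[1-2\delta,1+2\delta]$ must have $a_1,a_2$ in different copies of $A_0$, since otherwise $|k_1-k_2|<\tfrac{\sqrt{2}}{4}<1$; say $a_1\in A_0$ and $a_2=1+a_2'$ with $a_2'\in A_0$. Writing $w=a_1-a_2'$ and $v=b_2-b_1$, the condition $|k_1-k_2|^2=1+O(\delta)$ simplifies to
\begin{equation*}
v^2=2w-w^2+O(\delta),\qquad w\ge 0.
\end{equation*}
I would restrict to the window $w\in[c_1\delta,c_2\delta]$ for some fixed $0<c_1<c_2$, which forces $|v|\sim\sqrt{\delta}$ with a tolerance of width $\sim\sqrt\delta$. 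Ahlfors $\beta$-regularity of $A_0$ yields
\begin{equation*}
|\{(a_1,a_2')\in A_{0,\delta}^2:w\in[c_1\delta,c_2\delta]\}|\gtrsim\delta^{2-\beta},
\end{equation*}
while Ahlfors $\gamma$-regularity of $B$ yields
\begin{equation*}
|\{(b_1,b_2)\in B_\delta^2:|v-v_0|\lesssim\sqrt\delta\}|\gtrsim(\sqrt\delta)^\gamma\cdot\delta^{2-2\gamma}=\delta^{2-3\gamma/2}
\end{equation*}
for each $v_0\sim\sqrt\delta$. Multiplying these two two-dimensional slices (via Fubini in the independent $(a_1,a_2')$ and $(b_1,b_2)$ directions) produces $|D^\delta|\gtrsim\delta^{4-\beta-3\gamma/2}$ along the subsequence.

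The main obstacle is producing the Ahlfors \emph{lower} bound on $B_\delta$-intersections at the non-dyadic scale $\sqrt\delta$; restricting to $\delta=m^{-2n}$ makes $\sqrt\delta=m^{-n}$ a natural scale of the self-similar construction, which is precisely why this subsequence is used. A secondary technical point is that, for typical $v_0$, the interval $[v_0-\sqrt\delta,v_0+\sqrt\delta]$ must intersect $B_\delta-B_\delta$ with the expected mass; this is where choosing the scaling ratios of $B$ compatibly with the rationals $\beta,\gamma$ ensures the difference density is not atomic at the wrong values. Once these regularity facts are in place, the rest is bookkeeping of the product structure.
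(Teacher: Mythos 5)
Your proposal matches the paper's construction and argument closely: the paper also takes $K = (A_0\cup(A_0+1))\times B$ with Cantor sets of dimensions $\beta$ and $\gamma$, reduces the condition $|k_1-k_2|\approx 1$ to independent conditions $1-|a_1-a_2|\sim\delta$ on the first coordinate and $|b_1-b_2|\sim\sqrt\delta$ on the second, and multiplies the resulting lower bounds $\delta^{2-\beta}$ and $\delta^{2-3\gamma/2}$. The one overstated step — which you yourself flag as the ``main obstacle'' — is the displayed claim that Ahlfors $\gamma$-regularity of $B$ alone yields $|\{(b_1,b_2)\in B_\delta^2:|b_2-b_1-v_0|\lesssim\sqrt\delta\}|\gtrsim\delta^{2-3\gamma/2}$ for each $v_0\sim\sqrt\delta$: regularity controls ball counts but not where the difference-set mass sits, and a perfectly regular Cantor set can have gaps that miss the window $[\,\sqrt{7\delta/2},\,2\sqrt\delta\,]$ entirely. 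The bulk of the paper's proof is precisely the explicit construction $C(p,q)$ together with the choice $\delta_n = 2^{-2q_1q_2n-2}$, ensuring that $\sqrt{\delta_n}$ is a structural scale at which the difference-set lower bound (\eqref{L1}/\eqref{L4}) can be verified directly; completing your sketch requires supplying that construction.
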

\noindent To deduce \eqref{LB1}, approximate $\alpha$ by $\alpha '$ with $\beta$ very close to $0$;
to deduce  \eqref{LB2}, approximate $\alpha$ by $\alpha '$ with $\gamma$ very close to $1$.

\begin{proof} We will require compact subsets $A,B\subset [0,1]$ which satisfy \eqref{alphaset} with $\alpha$
replaced by $\beta$ in the case of $A$ and by $\gamma$ in the case of $B$. We will also need $A$ and $B$ to satisfy 
the two lower bounds
\begin{equation}\label{L0}
\int_{A_{\delta_n}}\int_{A_{\delta_n}}1_{\{2\delta_n \le |x_1 -x_2 |\le 5\delta_n /2\}}\ dx_1 \, dx_2 
\gtrsim \delta_n^{2-\beta}
\end{equation}
and
\begin{equation}\label{L1}
\int_{B_{\delta_n}}\int_{B_{\delta_n}}1_{\{ \sqrt{7\delta_n /2}\le |t_1 -t_2 |\le 2\sqrt{\delta_n} \}}\ dt_1 \, dt_2
 \gtrsim \delta_n^{2-2\gamma}\delta_n^{\gamma /2}.
\end{equation}
for a sequence $\delta_n$'s tending to $0$. (At the end of this proof we will say a few words 
about how to obtain $A$ and $B$.) Put $F=A\cup (A+1)$. Then
\begin{equation}\label{L2}
\int_{F_{\delta_n}}\int_{F_{\delta_n}}1_{\{2\delta_n \le 1-|x_1 -x_2 |\le 5\delta_n /2\}}\ dx_1 \, dx_2 
\gtrsim \delta_n^{2-\beta}.
\end{equation}
Let $K=F\times B$. Then \eqref{alphaset} holds with $\alpha ' =\beta +\gamma$ in place of $\alpha$
by our choices of $F$ and $B$.

Now 
$$
1-\delta \le \sqrt{(x_1 -x_2 )^2 +(t_1 -t_2 )^2}\le 1+\delta
$$
is equivalent to 
$$
\sqrt{(1-\delta )^2 -|x_1 -x_2 |^2 }\le |t_1 -t_2 |\le \sqrt{(1+\delta )^2 -|x_1 -x_2 |^2 }.
$$
If 
$$
2\delta\le 1-|x_1 -x_2 |\le 5\delta /2
$$
then 
$$
2\delta\le 1-|x_1 -x_2 |^2 \le 5\delta 
$$
and so if $\delta <1/2$ some algebra shows that  
\begin{equation*}
\sqrt{(1-\delta )^2 -|x_1 -x_2 |^2 }\le \sqrt{7\delta /2}<2\sqrt{\delta} \le \sqrt{(1+\delta )^2 -|x_1 -x_2 |^2 }.
\end{equation*}
Thus if 
$$
2\delta\le 1-|x_1 -x_2 |\le 5\delta /2  \text{ and }
 \sqrt{7\delta /2}\le |t_1 -t_2 |\le 2\sqrt{\delta}
 $$
it follows that 
$$
\sqrt{(1-\delta )^2 -|x_1 -x_2 |^2 }\le |t_1 -t_2 |\le \sqrt{(1+\delta )^2 -|x_1 -x_2 |^2 }.
$$
With \eqref{L2} and \eqref{L1} this gives 
$$
\int_{F_{\delta_n}}\int_{F_{\delta_n}}\int_{B_{\delta_n}} \int_{B_{\delta_n}}
1_{\{1-\delta_n \le \sqrt{(x_1 -x_2 )^2 +(t_1 -t_2 )^2}\le 1+\delta_n \}}\, dt_1 \, dt_2 \, dx_1 \, dx_2 
\gtrsim \delta_n^{4-(\beta +3\gamma /2)}
$$
and so  $|D^{\delta_n} |\gtrsim \delta_n^{4-(\beta +3\gamma /2)}$.

We conclude the proof of this lemma by describing a construction (which, though tedious, we include for the sake of completeness) of the required sets $F$ and $B$. For positive integers $p<q$ consider the Cantor set 
$C=C(p,q)$ constructed by removing $(2^p -1)$ equally spaced intervals open intervals  from $C_0 = [0,1]$
to obtain $C_1 =[0,2^{-q}]\cup\cdots\cup [1-2^{-q},1]$ and then continuing in the usual way, so that at the $j$th stage of the construction we have a set $C_j$ which is the union of $2^{jp}$ 
closed intervals of length $2^{-jq}$. Then 
\eqref{alphaset} holds with $C=\cap C_j$ instead of $K$ and with $\alpha =p/q$. Also, 
since $C_j \subset C+B(0,2^{-qj})=C_{2^{-qj}}$, for any $0<\kappa_1 <\kappa_2 <1$ we have 
\begin{equation*}
\int_{C_{2^{-qj}}}\int_{C_{{2^{-qj}}}}1_{\{\kappa_1 2^{-qj} \le |x_1 -x_2 |\le \kappa_2 2^{-qj} \}}\ dx_1 \, dx_2 
\gtrsim (2^{-qj})^{(2-p/q)}
\end{equation*}
and then also
\begin{equation}\label{L3}
\int_{C_{2^{-qj-2}}}\int_{C_{{2^{-qj-2}}}}1_{\{\kappa_1 2^{-qj} \le |x_1 -x_2 |\le \kappa_2 2^{-qj} \}}\ dx_1 \, dx_2 
\gtrsim (2^{-qj})^{(2-p/q)},
\end{equation}
where the implied constant depends on $\kappa_1$ and $\kappa_2$. One then sees that 
\begin{multline}\label{L4}
\int_{C_{2^{-2qj-2}}}\int_{C_{2^{-2qj-2}}}1_{\{\kappa2^{-qj} \le |x_1 -x_2 |\le 2^{-qj} \}}\ dx_1 \, dx_2 
\gtrsim \\
2^{2(p-q)j}(2^{-qj})^{(2-p/q)} =(2^{-2qj})^{(2-\frac{3}{2}\frac{p}{q})}.
\end{multline}

If $p_2$ and $q_2$ are chosen so that $\gamma =p_2 /q_2$, if $B=C(p_2 ,q_2 )$, and if
$$
\delta_n = 2^{-2q_1 q_2  n-2}
$$
then 
$$
\sqrt{\frac{7\delta_n} {2}}=\sqrt{\frac{7}{8}}\, 2^{-q_1 q_2 n},\ 2\sqrt{\delta_n}=2^{-q_1 q_2 n} 
$$
and so \eqref{L4} with $q=q_2$, $j=nq_1$, and
$\kappa =\sqrt{7/8}$ shows that \eqref{L1} holds. 

If $p_1$ and $q_1$ are chosen so that $\beta =p_1 /q_1$ and if $A=C(p_1 ,q_1 )$, then 
$$
2\delta_n =\frac{1}{2}2^{-2q_1 q_2 n},\, \frac{5}{2}\delta_n  =\small\frac{5}{8}2^{-2q_1 q_2 n}
$$
and so \eqref{L3} with $q=q_1$, $j=nq_2$, $\kappa_1 =1/2$, and $\kappa_2 = 5/8$ shows that \eqref{L0} holds. This completes the proof of the lemma.

\end{proof}

{\it Proof of Theorem \ref{d=3}:} The proof is similar to the proof of the bound $g_2 (\alpha )\leq 5\alpha /3$ of
Theorem \ref{d=2}. We begin by letting $K_m$, $\lambda =\lambda_n$, and the balls $B(c_n ,\delta )$, $1\le n\le N$ 
be the three-dimensional analogs of the quantities defined in the proof of Theorem \ref{d=2}. 
Instead of 
\eqref{UB2.1} we will now establish 
\begin{equation}\label{UB3.1}
\lambda \, N\, \delta^3 \lesssim \frac{\delta^{5(3-\alpha )}\delta^{\alpha /2}}{\lambda^3}.
\end{equation}
Interpolation with the trivial bound
\begin{equation}\label{trivest2}
\lambda \, N\, \delta^3 \lesssim \lambda\, \delta^{3-\alpha}
\end{equation}
gives 
\begin{equation}\label{UB3.2}
\lambda \, N\, \delta^3 \lesssim \lambda\, \delta^{6-15\alpha /8}.
\end{equation}
Then an argument completely analogous to the one in the proof of 
Theorem \ref{d=2} leads to $g_3 (\alpha )\le 15 \alpha /8$. 

Again choose a maximal $\delta$-separated subset $J$ of $K_\delta$. For each $c_n$ let 
\begin{equation*}
S_{c_n}=\{a\in J :1-3\delta \le |a-c_n |\le 1+3\delta \}=J\cap A(c_n ,3\delta )
\end{equation*}
Define 
\begin {multline*}
V=\\
\big\{(c_n ,a_1 ,a_2 ,a_3 ): 1\le n\le N,\, a_1 ,a_2 , a_3 \in S_{c_n},\, |a_i-a_j |\ge 
c\, \Big(\small\frac{\lambda}{\delta^{3-\alpha}}\Big)^{1/\alpha }
\text{ if }1\le i<j\le 3
\big\},
\end{multline*}
where $c$ is a small positive constant. We will prove \eqref{UB3.1} by again comparing upper and 
lower estimates for $|V|$. Before continuing we note that it suffices to prove \eqref{UB3.1} under the assumption that
\begin{equation}\label{largedist}
\Big(\small\frac{\lambda}{\delta^{3-\alpha}}\Big)^{1/\alpha }\gtrsim \delta^{1/2-\epsilon}
\end{equation}
for some small $\epsilon >0$ - otherwise \eqref{UB3.2} follows from \eqref{trivest2}. 
By using \eqref{alphaset} just as in the proof of \eqref{Tlower} we get the lower bound  
\begin{equation}\label{Tlower2}
|V|\gtrsim N\,\Big(\frac{\lambda}{\delta^3}\Big)^3 .
\end{equation}

As before we will obtain an upper bound for $|V|$ by controlling the multiplicity of the projection 
$$
(c_n ,a_1 ,a_2 ,a_3 )\mapsto (a_1 ,a_2 ,a_3 )
$$
of $V$ into $J^3$. In fact we will show that 
\begin{equation}\label{multiplicity2}
\text{$(c_n ,a_1 ,a_2 ,a_3 )\mapsto (a_1 ,a_2 ,a_3 )$ has multiplicity bounded by 
$C\,\delta^{-\alpha /2}\frac{\delta^{3-\alpha}}{\lambda}$.}
\end{equation}
Since $|J|\lesssim \delta^{-\alpha}$ it will then follow that
\begin{equation*}
|V|\lesssim \delta^{-3\alpha}\delta^{-\alpha /2}\frac{\delta^{3-\alpha}}{\lambda}.
\end{equation*}
Comparing this with \eqref{Tlower2} then gives \eqref{UB3.1}. Thus the proof of 
Theorem \ref{d=3} will be complete when \eqref{multiplicity2} is established.

We will establish \eqref{multiplicity2} by estimating the diameter of an intersection
\begin{equation}\label{shellintersect}
I\dot= A(a_1 ,3\delta )\cap A(a_2 ,3\delta )\cap A(a_3 ,3\delta ). 
\end{equation}
To begin, the intersection $A(a_1 ,0 )\cap A(a_2 ,0 )$ of the unit spheres centered at $a_1$ and $a_2$
is a circle contained in the hyperplane 
\begin{equation}\label{hplane}
P_{1,2}= \big\{x\in\bbR^3 :(x-a_1 )\cdot (a_2 -a_1 )=\frac{1}{2}|a_2 -a_1 |^2\big\}.
\end{equation}
If $x\in A(a_i ,3\delta )$ then $x\in A(a_i +e_i ,0)$ with $|e_i |\le 3\delta$. It follows that if 
$x\in A(a_1 ,3\delta )\cap A(a_2 ,3\delta )$, then 
\begin{equation*}
\big| (x-a_1 )\cdot (a_2 -a_1 )-\frac{1}{2}|a_2 -a_1 |^2 \big|\lesssim \delta ,
\end{equation*}
and so 
\begin{equation*}
A(a_1 ,3\delta )\cap A(a_2 ,3\delta )\subset P_{1,2}+B\big(0, C\frac{\delta} {|a_2 -a_1 |}\big).
\end{equation*}
Similarly, 
\begin{equation*}
A(a_1 ,3\delta )\cap A(a_3 ,3\delta )\subset P_{1,3}+B\big(0, C\frac{\delta} {|a_3 -a_1 |}\big).
\end{equation*}
If the $a_i$ are affinely independent, it follows that the intersection \eqref{shellintersect} is contained in an extrusion 
(in the direction perpendicular to $a_2 -a_1$ and $a_3 -a_1$) of a parallelogram $P$ contained in the plane 
$a_1 +\text{span} (a_2 -a_1 ,a_3 -a_1 )$. This parallelogram has two sides of length 
$\frac{C\delta}{|a_2 -a_1 |\sin (\theta )}$ perpendicular to $a_3 -a_1$ and 
two sides of length 
$\frac{C\delta}{|a_3 -a_1 |\sin (\theta )}$ perpendicular to $a_2 -a_1$ where $\theta\in (0,\pi )$ is the 
angle between $a_2 -a_1$ and $a_3 -a_1$.

We will need the estimate 
\begin{equation}\label{sine}
\sin (\theta )\gtrsim |a_3 -a_2|.
\end{equation}
This is the point at which \eqref{largedist} will come into play: we will be assuming that 
$
(c_n ,a_1 ,a_2 ,a_3 )\in V
$ 
and so it will follow that 
\begin{equation}\label{largeprod}
|a_2 -a_1 |, \, |a_3 -a_1 |, \, |a_3 -a_2 |\gtrsim \delta^{1/2-\epsilon}
\end{equation}
for some $\epsilon>0$. 
With no loss of generality we can write $a_1 =(0 ,0 ,0)$, $a_2 =(x_2 ,0 ,0)$, $a_3 =(x_3 ,y_3 ,0)$
and then assume that these points lie in the first octant, that $y_3 >0$,
and that $|a_2 |\ge |a_3 |$. We will now observe that if  $\sin (\theta )$ and therefore
$\tan (\theta )=y_3 /x_3$ are small compared to $|a_2 -a_3 |$, then the extrusion fails to intersect the 
shells $A(a_i ,3\delta )$.
To show this we begin by observing that  
the center $p$ of the parallelogram $P$ is the point of intersection of the perpendicular bisectors
of the segments $[a_1 ,a_2]$ and $[a_1 ,a_3 ]$ and has $y$ coordinate equal to 
$$
p_y \doteq\frac{y_3}{2}-\frac{x_3}{2y_3}(x_2 -x_3 )=\frac{y_3}{2}-\frac{1}{2\tan (\theta )}(x_2 -x_3 ).
$$
If $\tan (\theta )$ is small compared to $|a_2 - a_3|$, then $x_2 -x_3 \ge |a_2 -a_3 |/2$. Thus, 
it follows that $|p_y |$ is large. 
Since we have assumed that $|a_2 -a_1 | \geq |a_3 -a_1|$, the diameter of $P$ is bounded by 
$$
\frac{2C\delta}{|a_3 -a_1 |\sin (\theta )}\lesssim \frac{\delta^{1/2+\epsilon}}{\sin (\theta )},
$$
where we have used \eqref{largeprod}.
This will be small compared to $|p_y |$ (since
$$
\frac{|x_2 -x_3 |}{\tan (\theta )}\gtrsim \frac{\delta^{1/2-\epsilon}}{\sin (\theta )},
$$
again by \eqref{largeprod}). In this case the distance $\rho$ from $P$ to the $x$-axis will be comparable to $|p_y |$. But if $\rho>2$, say, the extrusion will miss the shells $A(a_i ,3\delta )$
(whose centers lie in the $xy$-plane above the $x$-axis).

With \eqref{sine} it now follows from the definition of $V$ that the diameter of $P$ is bounded by 
$C\delta\big(\frac{\delta^{3-\alpha}}{\lambda}\big)^{2/\alpha}$. The following estimate is a consequence of
the subadditivity of the function $\sqrt \cdot$ on $(0,\infty)$:
\begin{multline*}
\Big|\sqrt {(1+\epsilon_1 )^2 -(x_1 ^2 +y_1^2 )}-\sqrt {(1+\epsilon_2 )^2 -(x_2 ^2 +y_2^2 )}\Big|\le \\
\sqrt{\big|2\epsilon_1 +\epsilon_1^2 -2\epsilon_2 -\epsilon_2^2 \big|} +\sqrt {\big|
x_2^2 +y_2^2 -(x_1^2 +y_2^2 )\big|}.
\end{multline*}
With $|\epsilon_1 |,|\epsilon_2 |\le 2\delta$ this shows that if $(x_i ,y_i ,z_i )\in A(0,\delta )$ for $i=1,2$ and
$|(x_1 ,y_1 )-(x_2 ,y_2 )|\le \kappa$ then 
$$
|(x_1 ,y_1 ,z_1 )-(x_2 ,y_2 ,z_2)|\lesssim\max (\delta^{1/2},\kappa^{1/2}).
$$
Thus it follows from our bound on the diameter of $P$ that 
\begin{equation}\label{diamest}
\text{diam} (I)\le C\, \Big(\delta\big(\frac{\delta^{3-\alpha}}{\lambda}\big)^{2/\alpha}\Big)^{1/2}
=C\,\delta\,\delta^{-1/2}\big(\frac{\delta^{3-\alpha}}{\lambda}\big)^{1/\alpha}. 
\end{equation}
Now if $(c_n ,a_1 ,a_2 ,a_3 ), (c_{n'},a_1 ,a_2 ,a_3 )\in T$, we have $c_n ,c_{n'}\in I$. Thus
\eqref{diamest}, the fact that the $c_n$'s are $\delta$-separated, and \eqref{alphaset} 
together yield \eqref{multiplicity2}. This completes the proof of Theorem \ref{d=3}.

\end{document}